\newcommand*{\Z}{\mathbb{Z}}
\newcommand*{\Q}{\mathbb{Q}}
\newcommand*{\R}{\mathbb{R}}
\DeclarePairedDelimiter{\verts}{\lvert}{\rvert}
\newcommand*{\braces}[1]{\left\lbrace #1 \right\rbrace}
\newcommand*{\parens}[1]{\left\lparen #1 \right\rparen}
\newcommand*{\card}{\verts}
\newcommand*{\setof}{\braces}
\newcommand*{\deftobe}{\mathrel{\coloneqq}}
\newcommand*{\maps}{\colon}
\newcommand*{\st}{\,:\,}   
\DeclareMathOperator{\conv}{Conv}
\renewcommand*{\epsilon}{\varepsilon}
\renewcommand*{\phi}{\varphi}
\renewcommand{\subset}{\subseteq}
\newtheorem{theorem}{Theorem}[section]
\newtheorem*{theorem*}{Theorem}
\newtheorem*{lemma*}{Lemma}
\newtheorem{proposition}[theorem]{Proposition}
\newtheorem*{proposition*}{Proposition}
\newtheorem*{prob*}{Problem}
\newtheorem*{claim*}{Claim}
\theoremstyle{definition}
\newtheorem{definition}[theorem]{Definition}
\theoremstyle{remark}
\renewcommand{\P}{\mathcal{P}}
\newcommand{\QQ}{\mathcal{Q}}
\newcommand{\RR}{\mathcal{R}}
\newcommand{\Qhat}{\widehat{\Q}}
\newcommand{\Qstar}{Q_{\ast}}
\DeclareMathOperator{\Vol}{Vol}
\newcommand*{\ehr}{\operatorname{ehr}}  
\newcommand{\pt}[1]{\mathbf{#1}}
\DeclareMathOperator{\Pyr}{\Delta}
\newcommand{\divides}{\mathbin{\mid}}
\title{%
   Rational polytopes with Ehrhart coefficients of arbitrary 
   period
}%
\author[T.~B.~McAllister]
{%
   Tyrrell B. McAllister
}
\address
{%
   Department of Mathematics \\
   University of Wyoming \\
   Laramie, WY 82071 \\
   USA
}
\email
{%
   tmcallis@uwyo.edu
}
\subjclass[2010]{Primary 52B05; Secondary 05A15, 52B11, 52C07}
\keywords{Ehrhart quasi-polynomials; Period sequences; Integer
lattice points; Rational polytopes; Cyclic polytopes}
\begin{document}

\begin{abstract}
   A seminal result of E.~Ehrhart states that the number of
   integer lattice points in the dilation of a rational polytope
   by a positive integer $k$ is a quasi-polynomial function of~$k$
   --- that is, a ``polynomial'' in which the coefficients are
   themselves periodic functions of $k$.  Using a result of F.~Liu
   on the Ehrhart polynomials of cyclic polytopes, we construct
   not-necessarily-convex rational polytopes of arbitrary
   dimension in which the periods of the coefficient functions
   appearing in the Ehrhart quasi-polynomial take on arbitrary
   values.
\end{abstract}

\maketitle

\section{Introduction}

The Ehrhart function $\ehr_{\P}$ of an $n$-dimensional rational
polytope $\P \subset \R^{n}$ counts the number of integer lattice
points in the $k$\textsuperscript{th} dilate of $\P$.  That is,
$\ehr_{\P}(k) = \card{k\P \cap \Z^{n}}$ for integers $k \ge 1$.
It is well known that $\ehr_{\P}(k)$ is a degree-$n$
quasi-polynomial function of $k$, meaning that
\begin{equation}
   \label{eq:EhrhartQP}
   \ehr_{\P}(k) = \sum_{i=0}^{n} c_{i}(k) k^{i}, \qquad
   \text{for $k \in \Z_{\ge 1}$,}
\end{equation}
where the \emph{coefficient functions} $c_{i}\maps \Z \to \Q$ are
periodic functions with finite periods.  In other words, writing
$\Qhat$ for the ring of periodic functions $\Z \to \Q$, each
rational polytope $\P$ has an associated \emph{Ehrhart
quasi-polynomial} $\ehr_{\P}(t) \in \Qhat[t]$.  (We refer the
reader to~\cite{BecRob2007, Hib1992a, Sta1997} for introductions
to Ehrhart theory.)

The purpose of this paper is to study the possible periods of the
coefficient functions $c_{i}$ appearing in equation
\eqref{eq:EhrhartQP}.  Our main result (Theorem~\ref{thm:Main}
below) is that \emph{these periods may take on arbitrary values}.
Thus, we offer a contribution to the project of characterizing the
Ehrhart quasi-polynomials of all rational polytopes.  This latter
project has been the subject of a great deal of work for several
decades.  Many deep constraints on the coefficients $c_{i}$ have
been found.  See in particular~\cite{Sta1980, Sta1991, BetMcM1985,
Hib1994, BDLDPS05, Bra2008, Bre2012, HNP2009, HenTag2009, Pfe2010,
Sta2016, Sta2009, Her2010msthesis} and references therein.

It is a remarkable and humbling fact that many of these famous
results do not make full use of \emph{convexity} (cf.~\cite[Remark
1.12]{Sta2016}).  That is, the constraints discovered are
satisfied by types of rational polytopal balls that are more
general than just the convex polytopes, such as the star convex
polytopes.  Here, by a \emph{rational polytopal ball}, or a
\emph{not-necessarily-convex} rational polytope, we mean a
topological ball in $\R^{n}$ that is a union $\bigcup_{i \in I}
\P_{i}$ of a finite family $\setof{\P_i \st i \in I}$ of convex
rational polytopes, all with the same affine span, in which every
nonempty intersection $\P_{i} \cap \P_{j}$, $i \ne j$, is a common
facet of $\P_{i}$ and~$\P_{j}$.  Only in dimension $n=2$ do we
have a complete characterization of the Ehrhart polynomials of
precisely the \emph{convex} integral polygons \cite{Sco1976}.
Even here in dimension $2$, the case of \emph{nonintegral} convex
polygons remains open \cite{McAMor2017, Her2010msthesis}.

Nonetheless, even in the not-necessarily-convex case, the complete
characterization of Ehrhart quasi-polynomials of rational polytope
still seems quite far off.  Chastened by the difficulty of such a
complete characterization, we restrict our attention in this paper
and its predecessors (\cite{McAMor2017, McARoc2018}) to the
\emph{periods} of the coefficient~functions $c_{i}$.  To this end,
define the \emph{period sequence} of $\P$ to be the sequence
$(p_{0}, p_{1}, \dotsc, p_{n})$ in which $p_{i}$ is the (minimum)
period of $c_{i}$.  That is, $p_{i}$ is the minimum positive
integer such that $c_{i}(k) = c_{i}(k+p_{i})$ for all $k \in \Z$.
Our motivating question is thus: \emph{What are the possible
period sequences of rational polytopes?}

It is well known that, if $\P \subset \R^{n}$ is $n$-dimensional,
then the leading coefficient of $\ehr_{\P}(t)$ is the volume of
$\P$.  In particular, $c_{n}$ is a constant, so $p_{n} = 1$.
A~result of Beck, Sam, and Woods~\cite{BecSamWoo2008} provides a
polytope with period sequence $(p_{0}, \dotsc, p_{n-1}, 1)$,
provided that the desired periods $p_{i}$ satisfy the divisibility
relations $p_{n-1} \divides p_{n-2} \divides \dotsb \divides
p_{0}$.  In particular, the polytopes constructed in
\cite{BecSamWoo2008} all satisfy $p_{0} \ge p_{1} \ge \dotsb \ge
p_{n}$.  Polytopes can fail to satisfy these inequalities when
they exhibit the phenomenon of \emph{period collapse}
\cite{McAMor2017}.  Nonetheless, the construction
in~\cite{BecSamWoo2008} gives convex rational polytopes with
arbitrary period sequences of the form $(p, 1, \dotsc, 1)$.  (See
Theorem~\ref{thm:BecSamWoo2008} below.)

A polytope $\P \subset \R^{n}$ is \emph{integral} if all of its
vertices lie in the integer lattice $\Z^{n}$.  In this case,
$\ehr_{\P}(t)$ is simply a polynomial.  That is, the period
sequence of an integral polytope is $(1, \dotsc, 1)$.
In~\cite{McARoc2018}, we constructed convex rational polytopes
with arbitrary period sequences of the form $(1, p, 1,\dotsc, 1)$.
It is straightforward to glue the constructions
in~\cite{BecSamWoo2008} and~\cite{McARoc2018} along a common
integral facet to form a convex rational polytope establishing the
following.
\begin{theorem}
\label{thm:pp111}
   Let positive integers $p_{0}$ and $p_{1}$ be given.  Then there
   exists a convex $n$-dimensional polytope with period sequence
   $(p_{0}, p_{1}, 1, \dotsc, 1)$.
\end{theorem}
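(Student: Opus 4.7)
The plan is to glue two building-block polytopes along a common integral facet, as suggested by the paragraph preceding the theorem. I would begin by choosing a convex $n$-dimensional rational polytope $\P_0$ with period sequence $(p_0, 1, \ldots, 1)$, provided by the construction in \cite{BecSamWoo2008}, together with a convex $n$-dimensional rational polytope $\P_1$ with period sequence $(1, p_1, 1, \ldots, 1)$, provided by the construction in \cite{McARoc2018}. After applying lattice-preserving (unimodular) affine transformations, I would arrange both polytopes to share a common integral $(n-1)$-dimensional facet $F$, with $\P_0$ and $\P_1$ lying in opposite closed halfspaces of the hyperplane spanned by $F$. I would then set $\P \deftobe \P_0 \cup \P_1$.

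The Ehrhart computation is then formal. Inclusion--exclusion yields
\[
   \ehr_{\P}(k) = \ehr_{\P_0}(k) + \ehr_{\P_1}(k) - \ehr_F(k),
\]
and since $F$ is integral, all coefficient functions of $\ehr_F$ are constant. Writing $c_i, c_i^{(0)}, c_i^{(1)}, c_i^{(F)}$ for the degree-$i$ coefficient functions of $\ehr_\P, \ehr_{\P_0}, \ehr_{\P_1}, \ehr_F$ respectively, we have $c_i = c_i^{(0)} + c_i^{(1)} - c_i^{(F)}$. Since adding a constant to a periodic function does not change its minimum period, $c_0$ has minimum period $p_0$ (inherited from $c_0^{(0)}$, with $c_0^{(1)}$ and $c_0^{(F)}$ both constant); $c_1$ has minimum period $p_1$ (inherited from $c_1^{(1)}$, with the other summands constant); and $c_i$ for $i \geq 2$ is a sum of constants and therefore constant. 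Hence the period sequence of $\P$ is $(p_0, p_1, 1, \ldots, 1)$, as required.

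The main obstacle is ensuring that $\P_0 \cup \P_1$ is \emph{convex}, and not merely a rational polytopal ball. A convenient sufficient condition is to realize both $\P_i$ as lattice pyramids $\conv(F \cup \{v_i\})$ over the common integral base $F$, with apex vertices $v_0, v_1$ on opposite normal sides of $F$; in that case $\P = \conv(F \cup \{v_0, v_1\})$ provided the apexes are suitably ``centered'' with respect to $F$, so that at every ridge of $F$ the internal dihedral angles in $\P_0$ and $\P_1$ sum to at most $\pi$. The key technical step is thus to verify that the specific constructions in \cite{BecSamWoo2008} and \cite{McARoc2018} admit such a pyramidal realization over a shared integral base with compatibly placed apexes, while preserving the two period sequences; once this is done, the inclusion--exclusion argument above completes the proof.
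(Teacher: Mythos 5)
Your proposal follows essentially the same route as the paper, which establishes Theorem~\ref{thm:pp111} only by the one-line remark that the Beck--Sam--Woods polytope with period sequence $(p_{0},1,\dotsc,1)$ and the McAllister--Rochais polytope with period sequence $(1,p_{1},1,\dotsc,1)$ can be glued along a common integral facet; your inclusion--exclusion and period bookkeeping is exactly the intended argument. The only step you defer---checking that the two specific constructions can be positioned so that the union is actually convex (your common-base bipyramid condition is sufficient but stricter than necessary: since the Beck--Sam--Woods polytope is the shallow pyramid $\conv\setof{-\tfrac{1}{p_{0}}\pt{e}_{1}, \pt{0}, \pt{e}_{2},\dotsc,\pt{e}_{n}}$ over a unimodular simplex, it suffices to attach it to a unimodular integral facet of the other polytope with its apex beyond that facet and beneath all remaining facet hyperplanes, so that the union is itself a convex hull)---is precisely the step the paper likewise leaves to the reader as ``straightforward.''
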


Controlling the periods of higher-degree coefficients proved to be
more difficult.  In~\cite{McARoc2018}, we were able to exploit
previously discovered solutions to the system of Diophantine
equations known as the \emph{ideal Prouhet--Tarry--Escott} (PTE)
problem~\cite{Bor2002} to find $n$-dimensional polytopal balls
with period sequences of the form $(1, \dotsc, 1, p, 1)$, provided
that the dimension $n$ satisfied either \mbox{$3 \le n \le 11$} or
$n = 13$.

The main result of the current paper supersedes the PTE-based
construction from~\cite{McARoc2018} by proving the existence of
not-necessarily-convex polytopes of arbitrary dimension $n$ with
arbitrary period sequences $(p_{0}, p_{1}, \dotsc, p_{n-1}, 1)$.
\begin{theorem}[\protect{Proved in Section~\ref{sec:pppp1}}]
   \label{thm:Main}
   Let positive integers $p_{0}, \dotsc, p_{n-1}$ be given.  Then
   there exists an $n$-dimensional polytopal ball $\Qstar$ such
   that the period of the coefficient of $t^{i}$ in
   $\ehr_{\Qstar}(t)$ is $p_{i}$ for $0 \le i \le n-1$.
\end{theorem}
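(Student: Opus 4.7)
The plan is to build $\Qstar$ as a polytopal ball assembled from $n$ building blocks $\RR_0, \RR_1, \dotsc, \RR_{n-1}$, where each $\RR_i$ is engineered so that its Ehrhart quasi-polynomial controls the period of \emph{exactly one} coefficient, namely the coefficient of $t^i$. The essential input is F.~Liu's formula for the Ehrhart quasi-polynomial of cyclic polytopes on the moment curve, which writes each coefficient as a closed combinatorial expression and therefore lets one read off, a~priori, the denominators that will force periodic behavior of each $c_i(k)$.

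First, for each $i \in \{0, 1, \dotsc, n-1\}$, I would construct the building block $\RR_i$ roughly as follows. Start with a cyclic polytope on the moment curve with rational (non-integer) vertex parameters chosen so that Liu's formula predicts period $p_i$ in the $i$-th coefficient. In general, the formula will also force nontrivial periodicity in coefficients $c_j$ with $j \ne i$, so I would cancel these unwanted periodicities by forming a signed union of further cyclic polytopes --- realized geometrically by subtracting or attaching convex sub-polytopes along common facets, in a manner keeping the result a topological ball. The target property is that $\ehr_{\RR_i}(t)$ has period sequence $(1, \dotsc, 1, p_i, 1, \dotsc, 1)$, with the $p_i$ in the $i$-th slot.

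Second, I would glue $\RR_0, \RR_1, \dotsc, \RR_{n-1}$ along common \emph{integral} facets to form $\Qstar$. Because an integral facet $F$ has a polynomial (period-$1$) Ehrhart function, the inclusion--exclusion identity
\[
   \ehr_{\Qstar}(t) = \sum_{i=0}^{n-1} \ehr_{\RR_i}(t) \;-\; (\text{an integer polynomial in } t)
\]
shows that the periodic parts of the coefficients never interact with the facet corrections. Hence the coefficient of $t^i$ in $\ehr_{\Qstar}(t)$ inherits its period from $\RR_i$ alone and equals $p_i$, as required, while the coefficient of $t^n$ remains the constant volume (period $1$).

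The main obstacle is the first step: producing each $\RR_i$ with the prescribed period sequence $(1, \dotsc, 1, p_i, 1, \dotsc, 1)$. Liu's formula entangles all $n+1$ coefficients simultaneously, so manufacturing the disentangling signed union demands a careful, coefficient-by-coefficient choice of cyclic polytope parameters together with the companion sub-polytopes that cancel the unwanted periods. A secondary but essential check is the absence of \emph{period collapse}: after all of these cancellations, one must still verify that the period of the $i$-th coefficient is \emph{exactly} $p_i$ rather than a proper divisor. This typically reduces to exhibiting two integers $k, k'$ with $k' - k$ a proper divisor of $p_i$ at which $c_i(k) \neq c_i(k')$, using the explicit closed form coming from Liu's formula.
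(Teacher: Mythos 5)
Your second step---gluing the blocks along common integral facets so that the periodic parts of the coefficients never interact, and noting that the coefficient of $t^{n}$ stays constant---matches the paper's strategy and is sound. The gap is in the first step, which is where all the work lies. You propose to create the period $p_{i}$ by taking a cyclic polytope with \emph{rational, non-integral} vertex parameters on the moment curve and reading off the periods of its coefficients from Liu's formula. But Liu's theorem applies only to \emph{integral} cyclic polytopes: it says $\ehr_{C_{i}}(t)=\sum_{j\le i}\Vol(C_{j})t^{j}$ when the vertices are $\chi_{i}(T)$ for a set $T$ of integers, and it provides no closed form---and in particular no prediction of coefficient periods---for cyclic polytopes with non-integral vertices. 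So there is no ``explicit closed form coming from Liu's formula'' available either to engineer period $p_{i}$ in degree $i$ or to verify afterwards that no period collapse occurs. Moreover, the step in which you ``cancel the unwanted periodicities by forming a signed union of further cyclic polytopes'' is precisely the crux of the problem: you give no mechanism for producing these companion pieces, no reason the signed union can be realized geometrically as a single polytopal ball, and no argument that after cancellation the degree-$i$ coefficient has period exactly $p_{i}$ rather than a proper divisor.

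The paper sidesteps all of this by keeping the cyclic polytopes integral, so Liu's theorem applies verbatim, and by injecting the periodicity through two small rational pieces with complementary quasi-polynomials: the segment $\ell_{i}=[-\tfrac{1}{p_{i}},0]$, whose iterated pyramids $\Delta^{m}(\ell_{i})$ have period sequence $(p_{i},1,\dotsc,1)$ by Beck--Sam--Woods, and a pentagon $P_{i}$ satisfying $\ehr_{\Delta^{m}(P_{i})}(t)\equiv-\ehr_{\Delta^{m}(\ell_{i})}(t)$ modulo polynomials. Taking $L_{i}$ to be a translate of $C_{i}\times\Delta^{n-i-1}(\ell_{i})$ and $R_{i}$ a translate of $C_{i-1}\times\Delta^{n-i-1}(P_{i})$, joined through an integral middle polytope, the multiplicativity of Ehrhart functions under products together with Liu's recursion $\ehr_{C_{i}}(t)=\Vol(C_{i})t^{i}+\ehr_{C_{i-1}}(t)$ makes everything cancel except $\Vol(C_{i})t^{i}\,\ehr_{\Delta^{n-i-1}(\ell_{i})}(t)$, whose only non-constant coefficient sits in degree $i$ and has period exactly $p_{i}$. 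This degree-shifting product-and-telescoping mechanism---periodicity supplied by $\ell_{i}$ and $P_{i}$, degree placement supplied by the integral cyclic polytopes---is the idea missing from your proposal, and without it (or a concrete substitute) the construction of each block $\RR_{i}$ remains unestablished.
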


The proof of Theorem~\ref{thm:Main} depends upon a remarkable
property of cyclic polytopes (Theorem \ref{thm:Liu} below).  We
recall these polytopes and their Ehrhart polynomials in
Section~\ref{sec:CyclicPolytopes}.  In
Section~\ref{sec:BuildingBlocks}, we introduce the notation and
basic building blocks that we will use in our constructions.  In
Section~\ref{sec:11p11}, we build a rational polytopal ball with a
period sequence of the form $(1, \dotsc, 1, p, 1, \dotsc, 1)$, in
which a coefficient function of arbitrary degree has arbitrary
period.  Finally, in Section~\ref{sec:pppp1}, we combine the
constructions from Section~\ref{sec:11p11} to build a polytopal
ball with an arbitrary period sequence of the form $(p_{0},
p_{1},\dotsc, p_{n-1}, 1)$.

\section{Cyclic polytopes}
\label{sec:CyclicPolytopes}

Cyclic polytopes are perhaps most famous for their appearance in
the Upper Bound Theorem (McMullen~\cite{McM1970}): A
$d$-dimensional cyclic polytope attains the maximum number of
faces of every dimension among all $d$-dimensional polytopes with
the same number of vertices.  However, it is the Ehrhart
polynomials of cyclic polytopes, rather than their face lattices,
that will be of particular interest to us.

We recall the definition of cyclic polytopes.  Fix a subset $T
\subset \Z$ of $n+1$ integers.  (The particular subset chosen will
not matter for our purposes.)
We define a sequence of
polytopes $C_{i} \subset \R^{i}$, with $0 \le i \le n$, as
follows.  Let $C_{0} \deftobe \setof{0} \subset \R^{0}$.  For $1
\le i \le n$, let $\chi_{i} \maps \R \to \R^{i}$ be the moment
curve $x \mapsto (x, x^{2}, \dotsc, x^{i})$.  Then the
\emph{cyclic polytope} $C_{i} \subset \R^{i}$ is the convex hull
of the image of $T$ under $\chi_{i}$.  That is, $C_{i} \deftobe
\conv(\chi_{i}(T))$.

The Ehrhart polynomials of cyclic polytopes are unusual in that
all of their coefficients have straightforward geometric
interpretations.  Such interpretations are always available for
the two leading coefficients, $c_{d}$ and $c_{d-1}$, of the
Ehrhart polynomial of an arbitrary $d$-dimensional integral
polytope $\P$.
However, in the general case, no such interpretations exist for
the lower-degree coefficients of $\ehr_{\P}(t)$.  The cyclic
polytopes are a striking exception.  In particular,
F.~Liu~\cite{Liu2005} proved that
the Ehrhart polynomial of $C_{i}$ satisfies a beautiful recursive
expression first conjectured by Beck~et~al.\ in~\cite{BDLDPS05}:
\begin{theorem}[Liu~\cite{Liu2005}]
\label{thm:Liu}
   The Ehrhart polynomials of the cyclic polytopes are given by
   \begin{equation}\label{eq:CyclicRecurrence}
      \ehr_{C_{i}}(t) = \Vol(C_{i})t^{i} + \ehr_{C_{i-1}}(t),
      \qquad \text{for $1 \le i \le n$},
   \end{equation}
   or, equivalently,
   \begin{equation*}
      \ehr_{C_{i}}(t) = \sum_{j=0}^{i} \Vol(C_{j})t^{j},
   \end{equation*}
   where $\Vol(C_{j})$ denotes the volume of $C_{j}$ in $\R^{j}$.
   (By convention, $C_{0}$ has volume $1$.)
\end{theorem}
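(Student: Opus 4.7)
My plan is to exploit the projection $\pi\colon \R^i \to \R^{i-1}$ that forgets the last coordinate. Since $\pi(\chi_i(t)) = \chi_{i-1}(t)$ for every $t \in T$, we have $\pi(C_i) = C_{i-1}$, and for each $y \in C_{i-1}$ the fiber $\pi^{-1}(y) \cap C_i$ is a closed vertical interval $[f(y), g(y)]$. Here $f\colon C_{i-1} \to \R$ is convex piecewise-affine and $g\colon C_{i-1}\to\R$ is concave piecewise-affine, with pieces coming from the projections of the lower and upper facets of $C_i$, respectively.

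Counting lattice points of $kC_i$ fiber-by-fiber, I would write
\[
\ehr_{C_i}(k) \;=\; \sum_{y\,\in\,kC_{i-1}\cap\Z^{i-1}} \bigl(\lfloor kg(y/k)\rfloor - \lceil kf(y/k)\rceil + 1\bigr).
\]
The constant $+1$ term sums to exactly $\ehr_{C_{i-1}}(k)$, so the theorem reduces to the identity
\[
\sum_{y\,\in\,kC_{i-1}\cap\Z^{i-1}} \bigl(\lfloor kg(y/k)\rfloor - \lceil kf(y/k)\rceil\bigr) \;=\; \Vol(C_i)\,k^i.
\]
Since $\int_{kC_{i-1}}(kg(y/k) - kf(y/k))\,dy = \Vol(C_i)\,k^i$, this is the striking assertion that the Riemann-type sum coincides with the integral \emph{exactly}, with no error term.

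The main obstacle, and the heart of the argument, is proving this exact equality. My approach would be to use Gale's evenness criterion to describe the upper and lower facets of $C_i$ explicitly in terms of subsets of $T$, and to pair each upper facet $F_+$ with a companion lower facet $F_-$ whose projections cover $C_{i-1}$ compatibly. On each such pair the affine functions $g|_{\pi(F_+)}$ and $f|_{\pi(F_-)}$ have coefficients built from elements of the integer set $T$, and I would aim to show---either by a direct telescoping over the cells of the induced subdivision of $C_{i-1}$ or by a bijective matching of lattice points near the boundary---that the fractional parts $\{kg(y/k)\}$ and $\{-kf(y/k)\}$ cancel in aggregate. Executing this cancellation cleanly is the most technical step, and it is what makes cyclic polytopes so special: for general integral polytopes, no such clean recursion holds, since the fractional-part contributions from the upper and lower hulls fail to match. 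A secondary but necessary step, once the exact equality is in hand, is to interpret the resulting sum as $\Vol(C_i)\,k^i$ by relating it to the integral expression for the volume of $C_i$ via its decomposition into upper-facet pyramids over $C_{i-1}$.
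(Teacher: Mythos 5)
This statement is quoted by the paper from Liu's work and is not proved there at all --- the author explicitly remarks that the known proofs are ``far from trivial'' --- so the only question is whether your outline constitutes a proof, and it does not: it has a genuine gap exactly where the theorem's content lies. Your fiber-counting setup is fine: projecting away the last coordinate sends $C_i$ onto $C_{i-1}$, the $+1$'s sum to $\ehr_{C_{i-1}}(k)$, and the theorem is equivalent to $\sum_{y \in kC_{i-1}\cap\Z^{i-1}}\bigl(\lfloor kg(y/k)\rfloor - \lceil kf(y/k)\rceil\bigr) = \Vol(C_i)k^i$. But everything after that reduction is a statement of intent, not an argument. Note also that two distinct error terms must vanish simultaneously: writing $\lfloor kg\rfloor-\lceil kf\rceil = k(g-f)-\{kg\}-\{-kf\}$, you need both the discrepancy between the lattice sum of fiber lengths $\sum_y k\bigl(g(y/k)-f(y/k)\bigr)$ and the integral $\Vol(C_i)k^i$, and the aggregate of the fractional parts, to cancel exactly; your proposal conflates these into one ``Riemann sum equals integral'' assertion and offers no mechanism for either.

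The specific strategy you gesture at --- pairing each upper facet with a ``companion'' lower facet via Gale evenness and telescoping --- does not obviously exist: the upper and lower envelopes of $C_i$ induce two \emph{different} polyhedral subdivisions of $C_{i-1}$, with different numbers of cells in general, so there is no facet-to-facet matching over which to telescope, and the cancellation would have to be organized over the common refinement or, as in Liu's actual proofs, over a triangulation into simplices with vertices on the moment curve. What makes the argument work in Liu's papers is a structural property you never isolate or prove: every face of $C_i$ spanned by points of the moment curve has the ``lattice-face'' property that the projection of the lattice of its affine span is all of $\Z^{i-1}$ (a consequence of Vandermonde-type determinant identities in the integers of $T$), and this is what forces the fiber counts over each simplex to equal its (normalized) volume exactly, with an induction on dimension and a signed inclusion--exclusion handling the overlaps. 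Without establishing that property and carrying out the induction, the central identity remains an assertion; as you yourself note, it fails for general integral polytopes, so it cannot follow from the soft convexity/projection facts you have actually used.
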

We remark that the known proofs of this elegant result are far
from trivial~\cite{Liu2005, Liu2009, Liu2011}.

\section{Notation and building blocks}
\label{sec:BuildingBlocks}

In this section, we briefly review notation and results developed
in~\cite[Sections 2 and 3]{McARoc2018}, to which we refer the
reader for additional discussion and examples.

Our goal is to build polytopes of arbitrary dimension with
arbitrary prescribed period sequences.  Since adding a polynomial
to a quasi-polynomial does not change the period sequence, we will
consider two quasi-polynomials to be equivalent if their
difference is a polynomial.  Recall that we write $\Qhat$ for the
ring of periodic functions~$\Z \to \Q$.

\begin{definition}
   Two quasi-polynomials $q(t), r(t) \in \Qhat[t]$ are
   \emph{equivalent} if $q(t) - r(t) \in \Q[t]$.  In this case, we
   write $q(t) \equiv r(t)$.
\end{definition}

The chief convenience of this notation is that, if $\QQ \cup \RR$
is a union of rational polytopes $\QQ$ and $\RR$ such that $\QQ
\cap \RR$ is integral, then $\ehr_{\QQ \cup \RR} (t) \equiv
\ehr_{\QQ}(t) + \ehr_{\RR}(t)$.  Since $\Q[t]$ is not an ideal in
the ring $\Qhat[t]$, care must taken when multiplying
quasi-polynomials.  Nonetheless, a limited kind of substitution
holds: if $f(t) \in \Q[t]$ and $q(t) \equiv r(t) \in \Qhat[t]$,
then $f(t) q(t) \equiv f(t)r(t)$.

Fix a positive integer $p$.  (Typically, $p$ will be the desired
period of a coefficient function in the Ehrhart quasi-polynomial
of a rational polytope.)  Our constructions begin with two
fundamental building blocks: the closed line segment $\ell
\deftobe [-\frac{1}{p}, 0] \subset \R$, and the convex
pentagon\footnote{When $p=1$, $P$ is a triangle.} $P$ in $\R^{2}$
with vertices $\pt{u}^{+}$, $\pt{u}^{-}$, $\pt{v}^{+}$,
$\pt{v}^{-}$, $\pt{w}$, where
\begin{align}
   \pt{u}^{\pm} & \deftobe \pm q \pt{e}_{1}, &
   \pt{v}^{\pm} & \deftobe \pm (q-1) \pt{e}_{1} + \pt{e}_{2} , &
   \pt{w} & \deftobe \frac{q}{p}\pt{e}_{2}, \label{eq:pentagon}
\end{align}
and $q := p^{2} - p + 1$.  (Here and below, we write $\pt{e}_{i}$
for the $i$th standard basis vector.) 

A key fact, proved in~\cite{McAMor2017}, is that the Ehrhart
quasi-polynomials of $P$ and $\ell$ are ``complements'' of each
other in the sense that the periodic parts of their coefficients
cancel when the quasi-polynomials are added together.  That is,
\begin{equation}
   \label{eq:PentagonEquivToSegment}%
   \ehr_{P}(t) \equiv -\ehr_{\ell}(t).
\end{equation}
Furthermore, this equivalence is respected by the operation of
taking \mbox{$i$-fold} pyramids over $P$ and $\ell$.  The
\emph{pyramid} $\Pyr(\QQ)$ over a polytope $\QQ \subset \R^{d}$ is
the convex hull of the embedded copy of $\QQ$ in $\R^{d+1}$ at
height $0$ together with the standard basis vector $\pt{e}_{d+1}$.
That is, $\Pyr(\QQ) \deftobe \conv( \setof{ (\pt{x}, 0) \st \pt{x}
\in \QQ } \cup \setof{ \pt{e}_{d+1} } )$.  This operation may be
iterated, yielding the \emph{$i$-fold pyramid} $\Pyr^{i}(\QQ)
\deftobe \Pyr(\Pyr^{i-1}(\QQ)) \subset \R^{d+i}$.  (Of course,
$\Delta^{0}(\QQ) \deftobe \QQ$.)
\begin{proposition}[\protect{\cite[Proposition 3.1]{McARoc2018}}]
\label{prop:PentagonEquivToSegment-Pyramids}
   Let $P$ and $\ell$ be the pentagon and line segment defined
   above.  Then, for $i \ge 0$,
   \begin{equation}
   \label{eq:PentagonEquivToSegment-Pyramids}
      \ehr_{\Pyr^{i}(P)}(t) \equiv -\ehr_{\Pyr^{i}(\ell)}(t).
   \end{equation}
\end{proposition}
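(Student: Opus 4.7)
The plan is to induct on $i$. The base case $i = 0$ is exactly equation~\eqref{eq:PentagonEquivToSegment}, already established in~\cite{McAMor2017}. For the inductive step, I will prove the general principle that the pyramid operation respects the equivalence $\equiv$: whenever $\QQ, \RR \subset \R^{d}$ are rational polytopes of the same dimension with $\ehr_{\QQ}(t) \equiv -\ehr_{\RR}(t)$, then also $\ehr_{\Pyr(\QQ)}(t) \equiv -\ehr_{\Pyr(\RR)}(t)$. Specializing to $\QQ = \Pyr^{i-1}(P)$ and $\RR = \Pyr^{i-1}(\ell)$ will close the induction.

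The key tool is a slicing formula for the Ehrhart function of a pyramid. Because the apex $\pt{e}_{d+1}$ of $\Pyr(\QQ) \subset \R^{d+1}$ is a lattice point, the dilate $k\Pyr(\QQ)$ meets each horizontal hyperplane $\{x_{d+1} = h\}$, for $h \in \{0, 1, \dotsc, k\}$, in a copy of $(k-h)\QQ$ embedded at height $h$ (by similar triangles, as one scales linearly from the base toward the apex). Counting lattice points one slice at a time (with the apex contributing the single lattice point at height $k$) yields
\begin{equation*}
   \ehr_{\Pyr(\QQ)}(k) = 1 + \sum_{j=1}^{k} \ehr_{\QQ}(j),
\end{equation*}
an identity valid for any rational polytope $\QQ$, regardless of whether its vertices lie in $\Z^{d}$.

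With the slicing formula in hand, the inductive step reduces to a classical statement about polynomial summation. Assuming $\ehr_{\QQ}(t) + \ehr_{\RR}(t) = f(t) \in \Q[t]$, we get
\begin{equation*}
   \ehr_{\Pyr(\QQ)}(t) + \ehr_{\Pyr(\RR)}(t) = 2 + \sum_{j=1}^{t} f(j),
\end{equation*}
and the discrete summation operator $f(t) \mapsto \sum_{j=1}^{t} f(j)$ sends $\Q[t]$ into $\Q[t]$ (Faulhaber's theorem, or equivalently the telescoping identity $\sum_{j=1}^{t} \binom{j}{k} = \binom{t+1}{k+1}$). Hence the right-hand side is an honest polynomial in $t$, which by definition means $\ehr_{\Pyr(\QQ)}(t) \equiv -\ehr_{\Pyr(\RR)}(t)$, completing the induction. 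There is no serious obstacle here: the argument is a clean induction whose only substantive ingredient is the closure of $\Q[t]$ under the discrete summation operator, and the one place care is needed is in ensuring the pyramid apex is a lattice point so that every integer-height slice of $k\Pyr(\QQ)$ is genuinely an integer dilate of $\QQ$.
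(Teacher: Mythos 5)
Your proof is correct: the paper itself does not prove Proposition~\ref{prop:PentagonEquivToSegment-Pyramids} but imports it from \cite{McARoc2018}, and your argument --- the lattice-apex slicing identity $\ehr_{\Pyr(\QQ)}(k) = 1 + \sum_{j=1}^{k}\ehr_{\QQ}(j)$ together with the closure of $\Q[t]$ under the discrete summation operator --- is essentially the standard route taken in that reference. One small fix: your inductive-step lemma assumes $\QQ, \RR \subset \R^{d}$ have the same dimension, yet in the application $\Pyr^{i-1}(P)$ and $\Pyr^{i-1}(\ell)$ have different dimensions and live in different ambient spaces; since your proof of the lemma never uses that hypothesis, simply drop it and the induction closes as written.
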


Note that the $i$-fold pyramid $\Pyr^{i}(\ell) \subset \R^{i+1}$
over $\ell$ is the simplex
\begin{equation*}
   \Pyr^{i}(\ell) = \conv\setof{\pt{0}, -\tfrac{1}{p}\pt{e}_{1},
   \pt{e}_{2}, \dotsc, \pt{e}_{i+1}}.
\end{equation*}
An important fact for the constructions below is that the period
sequence of $\Pyr^{i}(\ell)$ is $(p, 1, \dotsc, 1)$.
\begin{theorem}[\protect{\cite[Theorem 2]{BecSamWoo2008}}]
\label{thm:BecSamWoo2008}
   Let $\ell \deftobe [-\frac{1}{p}, 0]$.  Then the period
   sequence of $\Delta^{i}(\ell)$ is the $(i+2)$-tuple $(p, 1,
   \dotsc, 1)$.
\end{theorem}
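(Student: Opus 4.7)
The plan is to compute the Ehrhart series of $S \deftobe \Pyr^{i}(\ell)$ directly from an $H$-description of the simplex and then read off the coefficient periods from its partial-fraction decomposition. The $(i+1)$-dimensional simplex $S \subset \R^{i+1}$ is cut out by the facet inequalities $x_{1} \le 0$, $x_{j} \ge 0$ for $2 \le j \le i+1$, and $-p\,x_{1} + x_{2} + \cdots + x_{i+1} \le 1$. Setting $y_{1} = -x_{1}$ and introducing a slack variable $s \ge 0$ for the last inequality, integer points of $kS$ are in bijection with nonnegative integer tuples $(y_{1}, x_{2}, \ldots, x_{i+1}, s)$ satisfying $p\,y_{1} + x_{2} + \cdots + x_{i+1} + s = k$, so
\begin{equation*}
   \sum_{k \ge 0} \ehr_{S}(k)\,z^{k} \;=\; \frac{1}{(1-z^{p})(1-z)^{i+1}}.
\end{equation*}

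Next I would apply the identity $\tfrac{1}{1-z^{p}} = \tfrac{1}{p}\sum_{\omega^{p}=1}\tfrac{1}{1-\omega z}$ to split this series into $p$ rational terms. The summand indexed by $\omega = 1$ has a pole of order $i+2$ at $z=1$ and contributes a polynomial in $k$ of degree $i+1$. For each $\omega \ne 1$, the summand has a simple pole at $z = \omega^{-1}$ and a pole of order $i+1$ at $z=1$; partial fractions at these poles contribute a polynomial in $k$ of degree at most $i$ together with a pure character term $\tfrac{1}{p}A_{\omega}\omega^{k}$, where the residue $A_{\omega} = \omega^{i+1}/(\omega - 1)^{i+1}$ is manifestly nonzero. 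Collecting,
\begin{equation*}
   \ehr_{S}(k) \;=\; f(k) + g(k), \qquad g(k) \deftobe \tfrac{1}{p}\sum_{j=1}^{p-1} A_{\zeta^{j}}\,\zeta^{jk},
\end{equation*}
where $\zeta$ is a primitive $p$-th root of unity and $f \in \Q[k]$ has degree $i+1$ (with leading coefficient $\tfrac{1}{p(i+1)!} = \Vol(S)$, as expected).

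Finally, the uniqueness of the representation $\ehr_{S}(k) = \sum_{m=0}^{i+1} c_{m}(k)\,k^{m}$ with periodic $c_{m}$ forces the entire periodic summand $g$ to land in $c_{0}$, so $c_{m}$ is the (constant) degree-$m$ coefficient of $f$ for each $m \ge 1$, yielding $p_{m} = 1$. To see $p_{0} = p$ exactly, suppose $g(k+d) = g(k)$ for all $k$; linear independence of the characters $k \mapsto \zeta^{jk}$ then gives $A_{\zeta^{j}}(\zeta^{jd} - 1) = 0$ for each $j \in \{1,\ldots,p-1\}$, and since every $A_{\zeta^{j}}$ is nonzero, taking $j = 1$ forces $\zeta^{d} = 1$ and hence $p \mid d$. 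The substantive point of the argument is precisely this last step: one must rule out accidental cancellations among the residues that could shorten the period of $g$, and this is handled cleanly by the explicit nonvanishing of each $A_{\zeta^{j}}$.
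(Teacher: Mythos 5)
Your proof is correct. Note that the paper itself offers no proof of this statement---it is imported verbatim from Beck--Sam--Woods---so there is no internal argument to compare against; what you have written is a valid self-contained derivation of the cited result. The facet description of $\Pyr^{i}(\ell)$ is right, the slack-variable bijection does give the Ehrhart series $1/\bigl((1-z^{p})(1-z)^{i+1}\bigr)$, and the roots-of-unity splitting correctly isolates a degree-$(i+1)$ polynomial plus a purely periodic term supported in degree $0$. Most importantly, you address the one point where such arguments usually have a gap: showing the period is \emph{exactly} $p$ rather than a proper divisor, which you handle by the nonvanishing of each residue $A_{\omega}=\omega^{i+1}/(\omega-1)^{i+1}$ together with linear independence of the characters $k\mapsto \zeta^{jk}$; the appeal to uniqueness of the periodic coefficient functions (seen by restricting to residue classes modulo $p$) is also legitimate. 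One minor point left implicit: the polynomial part $f$ indeed lies in $\Q[k]$, since the $\omega\neq 1$ contributions form a Galois-stable sum (equivalently, $f$ takes rational values on all integers), though nothing in the period argument actually depends on this.
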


\section{Nonconvex polytopes with
period sequence \texorpdfstring{$(1, \dotsc, 1, p, 1, \dotsc, 
1)$}{(1, ..., 1, p, 1, ..., 1)}}
\label{sec:11p11}

In this section, we construct an $n$-dimensional nonconvex
rational polytope $Q_{i}$ for which all Ehrhart coefficients are
constants, except for the coefficient of $t^{i}$, which has period
$p$, for arbitrary integers $p \ge 1$ and $0 \le i \le n-1$.

In the case where $i=0$, it suffices to set $Q_{0} \deftobe
\Delta^{n-1}(\ell)$ by Theorem~\ref{thm:BecSamWoo2008}.
Furthermore, we settled the $n = 2$ case in \cite{McAMor2017}.  We
thus proceed with the assumption that $i \ge 1$ and $n \ge 3$.

As in Section~\ref{sec:BuildingBlocks}, let $\ell \deftobe
[-\frac{1}{p}, 0]$ and let $P$ be the pentagon defined in terms of
$p$ by equations \eqref{eq:pentagon}.  Consider the
$n$-dimensional polytope\footnote{We adopt the natural conventions
to deal with the extreme cases $i = 1$ and $i = n-1$.  Hence,
$R_{1} \deftobe \Delta^{n-2}(P) + \pt{e}_{2}$, $R'_{1} \deftobe
\conv \setof{\pm q \pt{e}_{1}, \pt{e}_{3}, \dotsc, \pt{e}_{n}} +
\pt{e}_{2}$, $L'_{n-1} \deftobe \parens{C_{n-1} \times \setof{0}}
- \pt{e}_{n}$, and $R'_{n-1} \deftobe \parens{C_{n-2} \times
\conv{\setof{\pm q \pt{e}_{1}}}} + \pt{e}_{n}$.}
\begin{equation}\label{eq:LeftSummand}
   L_{i} \deftobe \parens{C_{i} \times \Delta^{n-i-1}(\ell)} -
   \pt{e}_{i+1}
\end{equation}
and its facet
\begin{equation}\label{eq:LeftSummandFacet}
   L'_{i} \deftobe \parens{C_{i} \times \conv\setof{\pt{0},
   \pt{e}_{2}, \dotsc, \pt{e}_{n-i}}} - \pt{e}_{i+1},
\end{equation}
as well as the $n$-dimensional
polytope\begin{equation}\label{eq:RightSummand} R_{i} \deftobe
\parens{C_{i-1} \times \Delta^{n-i-1}(P)} + \pt{e}_{i+1}
\end{equation}
and its facet
\begin{equation}\label{eq:RightSummandFacet}
   R'_{i} \deftobe \parens{C_{i-1} \times \conv\setof{q\pt{e}_{1},
   -q\pt{e}_{1}, \pt{e}_{3}, \dotsc, \pt{e}_{n-i+1}}} +
   \pt{e}_{i+1}.
\end{equation}
(Recall from Section~\ref{sec:BuildingBlocks} that $q \deftobe
p^{2}-p+1$.)  Observe that $L'_{i}$ and $R'_{i}$ are
$(n-1)$-dimensional integral polytopes in $\R^{n}$, with $L'_{i}$
contained in the hyperplane $x_{i+1} = -1$ and with $R'_{i}$
contained in the hyperplane $x_{i+1} = 1$.  Furthermore, $L_{i}$
is contained in the halfspace $x_{i+1} \le -1$, and $R_{i}$ is
contained in the halfspace $x_{i+1} \ge 1$.  Let $M_{i} \deftobe
\conv(L'_{i} \cup R'_{i})$.  Then $M_{i}$ is an $n$-dimensional
integral polytope lying between the hyperplanes $x_{i+1} = -1$ and
$x_{i+1} = 1$.  In particular, $L_{i}$, $M_{i}$, and $R_{i}$ have
pairwise disjoint interiors and integral intersections.

We are now ready to construct the not-necessarily-convex polytope
$Q_{i}$ with period sequence $(1, \dotsc, 1, p, 1, \dotsc, 1)$.
We define
\begin{equation*}
   Q_{i} \deftobe L_{i} \cup M_{i} \cup R_{i}.
\end{equation*}

\begin{theorem}\label{thm:11p11}
   Fix an arbitrary dimension $n$, degree $i$ with $0 \le i \le
   n-1$, and period $p$.  Then the $n$-dimensional polytopal ball
   $Q_{i}$ constructed above has an Ehrhart quasi-polynomial
   $\ehr_{Q_{i}}(t)$ in which all coefficient functions are
   constants, except for the coefficient of $t^{i}$, which has
   period $p$.
\end{theorem}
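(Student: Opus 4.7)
The plan is to decompose $\ehr_{Q_i}(t)$ modulo genuine polynomials into a sum over the three pieces $L_i, M_i, R_i$, and then to collapse that sum using the building blocks of Section~\ref{sec:BuildingBlocks} together with Liu's recurrence. First I would use the separation of $L_i$ (which lies in the halfspace $x_{i+1}\le -1$) from $R_i$ (which lies in $x_{i+1}\ge 1$), together with the integrality of $L_i \cap M_i = L'_i$ and $R_i \cap M_i = R'_i$, to apply the additivity rule $\ehr_{\QQ \cup \RR}(t) \equiv \ehr_{\QQ}(t) + \ehr_{\RR}(t)$ from Section~\ref{sec:BuildingBlocks}. Since $M_i$ is itself integral, $\ehr_{M_i}(t)\in\Q[t]$, hence $\ehr_{M_i}(t)\equiv 0$, and the decomposition reduces to
\[
   \ehr_{Q_i}(t) \equiv \ehr_{L_i}(t) + \ehr_{R_i}(t).
\]

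Next I would compute each summand as a product. Translation by an integer vector preserves the Ehrhart quasi-polynomial, and for a product $A \times B$ of rational polytopes the Ehrhart quasi-polynomial multiplies, so
\begin{align*}
   \ehr_{L_i}(t) &= \ehr_{C_i}(t)\,\ehr_{\Pyr^{n-i-1}(\ell)}(t), \\
   \ehr_{R_i}(t) &= \ehr_{C_{i-1}}(t)\,\ehr_{\Pyr^{n-i-1}(P)}(t).
\end{align*}
The cyclic polytopes $C_i$ and $C_{i-1}$ are integral, since the moment curve sends integers to integer points, so $\ehr_{C_i}(t)$ and $\ehr_{C_{i-1}}(t)$ both lie in $\Q[t]$. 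By Proposition~\ref{prop:PentagonEquivToSegment-Pyramids}, $\ehr_{\Pyr^{n-i-1}(P)}(t) \equiv -\ehr_{\Pyr^{n-i-1}(\ell)}(t)$, and the limited substitution rule for multiplication by a genuine polynomial upgrades this to $\ehr_{R_i}(t) \equiv -\ehr_{C_{i-1}}(t)\,\ehr_{\Pyr^{n-i-1}(\ell)}(t)$. Summing and invoking Liu's recurrence (Theorem~\ref{thm:Liu}) to replace $\ehr_{C_i}(t) - \ehr_{C_{i-1}}(t)$ by the single monomial $\Vol(C_i)\, t^i$, I obtain
\[
   \ehr_{Q_i}(t) \equiv \Vol(C_i)\, t^i \cdot \ehr_{\Pyr^{n-i-1}(\ell)}(t).
\]

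To finish, I would apply Theorem~\ref{thm:BecSamWoo2008} to write $\ehr_{\Pyr^{n-i-1}(\ell)}(t) = c_0(t) + c_1 t + \cdots + c_{n-i} t^{n-i}$ with $c_0$ of period exactly $p$ and $c_1,\dotsc,c_{n-i}$ rational constants. Multiplication by the positive constant $\Vol(C_i)$ and the monomial $t^i$ shifts this spectrum so that the coefficient of $t^i$ in the product becomes the period-$p$ function $\Vol(C_i)\, c_0(t)$, while all the other coefficients remain rational constants. Passing from $\equiv$ back to genuine equality adjusts each coefficient only by a rational constant, so every coefficient of $\ehr_{Q_i}(t)$ is a constant except that of $t^i$, whose period is exactly $p$.

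The load-bearing step is the invocation of Liu's theorem: turning $\ehr_{C_i}(t) - \ehr_{C_{i-1}}(t)$ into the single monomial $\Vol(C_i)\, t^i$ is precisely what forces the prescribed period $p$ to appear at degree $i$ and nowhere else. The rest is bookkeeping with the equivalence $\equiv$, along with handling the edge cases $i=1$ and $i=n-1$ covered by the footnote, in which one of the factors $C_0$, $\Pyr^0(\ell)$, or $\Pyr^0(P)$ degenerates but the same chain of identities continues to hold.
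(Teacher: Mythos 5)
Your proposal is correct and follows essentially the same route as the paper's proof: reduce to $\ehr_{Q_i}(t) \equiv \ehr_{L_i}(t) + \ehr_{R_i}(t)$ via the integrality of $M_i$ and its facets, factor each summand as a product, cancel via Proposition~\ref{prop:PentagonEquivToSegment-Pyramids} and Liu's Theorem~\ref{thm:Liu} to get $\Vol(C_i)\,t^i\,\ehr_{\Delta^{n-i-1}(\ell)}(t)$, and finish with Theorem~\ref{thm:BecSamWoo2008}. The only differences are cosmetic (you apply the substitution rule to $\ehr_{R_i}$ before invoking Liu, and you spell out the justification that $\ehr_{C_{i-1}}(t)\in\Q[t]$), so no further comment is needed.
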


\begin{proof}
   As indicated at the beginning of this section, we may assume
   that $i \ge 1$ and $n \ge 3$.  Since $M_{i}$ is an integral
   polytope meeting $L_{i}$ and $R_{i}$ at integral facets, it
   follows from the construction above that
   \begin{equation*}
      \ehr_{Q_{i}}(t) \equiv \ehr_{L_{i}}(t) + \ehr_{R_{i}}(t).
   \end{equation*}
   Thus, 
   \begin{align*}
      \ehr_{Q_{i}}(t) %
       &\equiv \ehr_{C_{i} \times \Delta^{n-i-1}(\ell)}(t) +
       \ehr_{C_{i-1} \times \Delta^{n-i-1}(P)}(t) \\
       &= \ehr_{C_{i}}(t)  \ehr_{\Delta^{n-i-1}(\ell)}(t) + 
       \ehr_{C_{i-1}}(t)  \ehr_{\Delta^{n-i-1}(P)}(t) \\
       &\equiv \parens{\Vol(C_{i})t^{i} + \ehr_{C_{i-1}}(t)} 
       \ehr_{\Delta^{n-i-1}(\ell)}(t) - \ehr_{C_{i-1}}(t) 
       \ehr_{\Delta^{n-i-1}(\ell)}(t)\\
       &= \Vol(C_{i})t^{i}\ehr_{\Delta^{n-i-1}(\ell)}(t).       
   \end{align*}
   In this sequence of computations, the third line is the crucial
   step invoking Liu's Theorem~\ref{thm:Liu}, as well as
   Proposition \ref{prop:PentagonEquivToSegment-Pyramids}.  The
   second line uses the general fact that $\ehr_{\P \times
   \RR}(t) = \ehr_{\P}(t) \ehr_{\RR}(t)$.
   
   By Theorem~\ref{thm:BecSamWoo2008}, all coefficient functions
   in $\ehr_{\Delta^{n-i-1}(\ell)}(t)$ are constants, except for
   the ``constant'' coefficient---that is, the coefficient
   function in the degree-$0$ term---which has period $p$.  Thus,
   all coefficient functions in
   $\Vol(C_{i})t^{i}\ehr_{\Delta^{n-i-1}(\ell)}(t)$ are constant
   functions, except for the coefficient of $t^{i}$, which has
   period $p$.  It follows from the equivalence of
   quasi-polynomials shown above that the same is true of
   $\ehr_{Q_{i}}(t)$, as desired.
\end{proof}

\section{Nonconvex polytopes with period sequence
\texorpdfstring{$(p_{0}, p_{1}, \dotsc, p_{n-1}, 1)$}{(p\_0, p\_1,
..., p\_\{n-1\}, 1)}}
\label{sec:pppp1}

In this section, we construct a nonconvex polytope $Q_{\ast}$ for
which the Ehrhart quasi-polynomial has an arbitrary period
sequence.  Let the desired period sequence be $(p_{0}, p_{1},
\dotsc, p_{n-1}, 1)$, where each $p_{i}$ is a positive integer.
The previous section showed how to construct a polytope $Q_{i}$
with period sequence $(1, \dotsc, 1, p_{i}, 1, \dotsc, 1)$, where
$p_{i}$ is the period of the $i$\textsuperscript{th} coefficient.
In this section, we will modify that construction so that the
resulting ``modified $Q_{i}$'' can be glued together to build the
polytope $Q_{\ast}$ with the desired period sequence.

For $0 \le i \le n-1$, let $\ell_{i} \deftobe [-\frac{1}{p_{i}},
0]$, and let $P_{i}$ be the pentagon defined by equations
\eqref{eq:pentagon} after replacing $p$ by $p_{i}$ and $q$ by
$q_{i} \deftobe p_{i}^{2} - p_{i} + 1$.

We first deal with the periods $p_{i}$ with $i \ge 1$.  Observe
that the construction of $Q_{i}$ in Section~\ref{sec:11p11} goes
through if we replace the translations by $\pm\pt{e}_{i+1}$ in
equations~\eqref{eq:LeftSummand}--\eqref{eq:RightSummandFacet}
with translations by $\pm k_{i} \pt{e}_{i+1}$, where $k_{i}$ is an
arbitrary positive integer (to be fixed below), as follows:
\begin{align}
   L_{i} & \deftobe \parens{C_{i} \times \Delta^{n-i-1}(\ell_{i})}
   - k_{i}\pt{e}_{i+1}, \label{eq:LeftSummandNew} \\
   L'_{i} & \deftobe \parens{C_{i} \times \conv\setof{\pt{0},
   \pt{e}_{2}, \dotsc, \pt{e}_{n-i}}} - k_{i}\pt{e}_{i+1}, \\
   R_{i} & \deftobe \parens{C_{i-1} \times \Delta^{n-i-1}(P_{i})} +
   k_{i}\pt{e}_{i+1}, \\
   R'_{i} & \deftobe \parens{C_{i-1} \times
   \conv\setof{q_{i}\pt{e}_{1}, -q_{i}\pt{e}_{1}, \pt{e}_{3},
   \dotsc, \pt{e}_{n-i+1}}} + k_{i}\pt{e}_{i+1}.
   \label{eq:RightSummandFacetNew}
\end{align}
Likewise, to handle the periodicity $p_{0}$ in the degree-$0$
term, we may define a translated version of the polytope $Q_{0} $
from Section~\ref{sec:11p11}, as well as one of its facets:
\begin{align}
   Q_{0} & \deftobe \Delta^{n-1}(\ell_{0}) - k_{0}\pt{e}_{1}, \\
   Q'_{0} & \deftobe \conv\setof{\pt{0}, \pt{e}_{2}, \dotsc,
   \pt{e}_{n}} - k_{0}\pt{e}_{1} \label{eq:PeriodZeroFacetNew}.
\end{align}

As in Section~\ref{sec:11p11}, we find that $L'_{i}$ and $R'_{i}$
are $(n-1)$-dimensional integral polytopes in $\R^{n}$, with
$L'_{i}$ contained in the hyperplane $x_{i+1} = -k_{i}$ and with
$R'_{i}$ contained in the hyperplane $x_{i+1} = k_{i}$.
Furthermore, $L_{i}$ is contained in the halfspace $x_{i+1} \le
-k_{i}$, and $R_{i}$ is contained in the halfspace $x_{i+1} \ge
k_{i}$.  Finally, $Q'_{0}$ is contained in the hyperplane $x_{1} =
-k_{0}$, and $Q_{0}$ is contained in the halfspace $x_{1} \le
-k_{0}$.

We now fix the translation parameters $k_{0}, k_{1}, \dotsc,
k_{n-1}$ in equations
\eqref{eq:LeftSummandNew}--\eqref{eq:PeriodZeroFacetNew} to be
sufficiently large so that the vertices of the facets $Q'_{0},
L'_{1}, R'_{1}, \dotsc, L'_{n-1}, R'_{n-1}$ all lie in convex
position.  That is, we choose $k_{0}, k_{1}, \dotsc, k_{n-1}$ so
that each of these facets is a facet of the integral polytope
$$M \deftobe \conv \parens{Q'_{0} \cup L'_{1} \cup R'_{1} \cup
\dotsb \cup L'_{n-1} \cup R'_{n-1}}.$$ (The values of $k_{i}$ that
are sufficiently large will depend upon the desired periods
$p_{0}, p_{1}, \dotsc, p_{n-1}$, as well as on the particular
$(n+1)$-subset $T \subset \Z$ used to construct the cyclic
polytopes $C_{i}$ in Section~\ref{sec:CyclicPolytopes}.)

Using these translated versions for $L_{i}$ and $R_{i}$, let us
redefine the polytope $Q_{i}$ for $1 \le i \le n-1$ by setting
$Q_{i} \deftobe L_{i} \cup M \cup R_{i}$.  As in
Section~\ref{sec:11p11}, $Q_{i}$ has period sequence $(1, \dotsc,
1, p_{i}, 1, \dotsc, 1)$.  Finally, we let $Q_{\ast} \deftobe
\bigcup_{i=0}^{n-1} Q_{i}$.  As in the proof
of~Theorem~\ref{thm:11p11}, we compute that
\begin{equation*}
   \ehr_{Q_{\ast}}(t) \equiv \sum_{i=0}^{n-1}
   \Vol(C_{i})t^{i}\ehr_{\Delta^{n-i-1}(\ell_{i})}(t).
\end{equation*}
Therefore, the period sequence of $Q_{\ast}$ is $(p_{0}, p_{1},
\dotsc, p_{n-1}, 1)$, as desired.

\def\cprime{$'$}
\providecommand{\bysame}{\leavevmode\hbox to3em{\hrulefill}\thinspace}
\providecommand{\MR}{\relax\ifhmode\unskip\space\fi MR }
\providecommand{\MRhref}[2]{%
  \href{http://www.ams.org/mathscinet-getitem?mr=#1}{#2}
}
\providecommand{\href}[2]{#2}

\end{document}